
\documentclass[11pt]{amsart}

\usepackage{amsfonts,amssymb,amsmath,amsthm,epsfig,euscript}

\usepackage{latexsym}
\usepackage{eepic,epic}
\usepackage{url}

\newtheorem{theorem}{Theorem}
\newtheorem{lemma}[theorem]{Lemma}
\newtheorem{corollary}{Corollary}

\newtheorem{proposition}[theorem]{Proposition}
\newtheorem{observation}{Observation}

\theoremstyle{definition}
{
\newtheorem{definition}{Definition}

}

\long\def\symbolfootnote[#1]#2{\begingroup
\def\thefootnote{\fnsymbol{footnote}}\footnote[#1]{#2}\endgroup}


\newcommand{\nolncs}[1]{}

\title{Semi-Transitive Orientations and Word-Representable Graphs$^{*}$}
\thanks{$*$ A preliminary version of this
    work appeared in the International Workshop on Graph-Theoretic Concepts in Computer Science (WG), 2008.}

\urldef{\magnusmail}\url{mmh@ru.is}
\urldef{\sergeymail}\url{sergey.kitaev@cis.strath.ac.uk}
\urldef{\artemmail}\url{artem@math.nsc.ru}

\author[M. M. Halld{\'o}rsson]{Magn{\'u}s M. Halld{\'o}rsson$^{\dagger}$}
\thanks{$\dagger$ School of Computer Science, Reykjavik University, 101 Reykjavik, Iceland. Email: \magnusmail.
   Supported by grant-of-excellence 120032011 from the Iceland Research Fund.}
\author[S. Kitaev]{Sergey Kitaev$^{\ddag}$}
\thanks{$\ddag$ Department of Computer and Information Sciences, University of Strathclyde, Glasgow G1 1XH, UK. Email: \sergeymail}

\author[A. Pyatkin]{Artem Pyatkin$^{\S}$}
\thanks{$\S$ Sobolev Institute of Mathematics; Novosibirsk State University; Novosibirsk, Russia. Email: \artemmail}

\begin{document}


\maketitle


\begin{abstract}
A graph $G=(V,E)$ is a \emph{word-representable graph} if there exists a word $W$ over
the alphabet $V$ such that letters $x$ and $y$ alternate in $W$ if
and only if $(x,y)\in E$ for each $x\neq y$.

In this paper we give an effective characterization of word-representable
graphs in terms of orientations. Namely, we show that a graph is
word-representable if and only if it admits a \emph{semi-transitive
orientation} defined in the paper.
This allows us to prove a number of results about word-representable graphs,
in particular showing that the recognition problem is in NP,
and that word-representable graphs include all 3-colorable graphs.

We also explore bounds on the size of the word representing the graph.
The representation number of $G$ is the minimum
$k$ such that $G$ is a representable by a word, where each letter occurs $k$ times; such a $k$ exists for any word-representable graph.
We show that the representation
number of a word-representable graph on $n$ vertices is at most $2n$, while there exist graphs for which it
is $n/2$. \\

\noindent
\textbf{Keywords:} graphs, words, orientations, word-representability,
complexity, circle graphs, comparability graphs.

\end{abstract}


\section{Introduction}

A graph $G=(V,E)$ is \emph{word-representable} if there exists a word $W$
over the alphabet $V$ such that  for each pair of distinct letters $x$ and $y$,
$(x,y)\in E$ if and only if the occurrences of the letters alternate in $W$.
As an example, the words $abcdabcd$, $abcddcba$, and $abdacdbc$ represent the 4-clique, $K_4$; 4-independent set, $\overline{K_4}$; and the 4-cycle, $C_4$, labeled by $a,b,c,d$ in clockwise direction, respectively.

If each letter appears exactly $k$ times in the word,
the word is said to be \emph{$k$-uniform} and the graph is said to be \emph{$k$-word-representable}.
It is known that any word-representable graph is $k$-word-representable, for some $k$ \cite{KP}.

The class of word-representable graphs is rich, and properly contains several important graph classes to be discussed next. \\

\begin{itemize}
\item[] {\bf Circle graphs.}
  Circle graphs are those whose vertices can be represented as chords on a circle in such a way that two nodes in the
  graph are adjacent if and only if the corresponding chords overlap.
  Assigning a letter to each chord and listing the letters in the order they appear along the circle, one obtains
  a word where each letter appears
  twice and two nodes are adjacent if and only if the letter occurrences alternate \cite{Courcelle}.
  Therefore, circle graphs are the same as 2-word-representable graphs.\\

\item[] {\bf Comparability graphs.} A comparability graph is one that admits a transitive orientation of the edges, i.e. an assignment of directions to the edges such that the adjacency relation of the resulting digraph is transitive:
the existence of arcs $xy$ and $yz$ yields that $xz$ is an arc.
Such a digraph induces a poset on the set of vertices $V$. Note that each poset is an intersection of several linear orders and each linear order corresponds to some permutation $P_i$ of $V$.
These permutations can be concatenated to a word of the form $P_1P_2\cdots P_k$. Then two letters alternate in this word if and only if they are in the same order in each permutation (linear order), and this means that they are comparable in the poset and, thus, the corresponding letters are adjacent in the graph. So, comparability graphs are a subclass of word-representable graphs that is known as the class of \emph{permutationally representable graphs} in the literature.\\

\item[] {\bf Cover graphs.}
The ({\em Hasse}) \emph{diagram} of a partial order $P=(V,<)$ is the directed graph on $V$ with an arc from $x$ to $y$ if $x < y$ and there is no $z$ with $x < z < y$ (in which case $x$ ``covers'' $y$). A graph is a \emph{cover graph} if it can be oriented as a diagram of a partial order.
Limouzy \cite{L}
observed that cover graphs are exactly the triangle-free word-representable graphs.\\

\item[] {\bf 3-colorable graphs.} A corollary of our main structural result in this paper
is that the class of word-representable graphs contains all 3-colorable graphs.\\
\end{itemize}

Various computational hardness results follow from these inclusions. Most importantly, since
it is an NP-hard problem to recognize cover graphs \cite{Brightwell,NR}, the same holds for word-representable graphs.
Also, the NP-hardness of various optimization problems, such as Independent Set, Dominating Set, Graph
Coloring, and Clique Partition, follows from the case of 3-colorable graphs.\\


\noindent {\bf Our results.}
The main result of the paper is an alternative characterization of word-representable graphs in terms of orientations.

A directed graph (digraph) $G=(V,E)$ is  \emph{semi-transitive} if
it is acyclic and for any directed path $v_1v_2\cdots v_k$, either
$v_1v_k\not\in E$ or $v_iv_j\in E$ for all $1\le i<j\le k$. Clearly,
comparability graphs (i.e. those admitting transitive orientations) are semi-transitive.
The main result of this paper is Theorem~\ref{thm:rep-equals-semi-trans} saying that a graph is word-representable if
and only if it admits a semi-transitive orientation.

The proof of the main result shows that any word-representable graph on $n\ge 3$ vertices is $(2n-4)$-word-representable.  This
bound implies that the problem of recognizing word-representable graphs is contained in NP.  Previously, no polynomial
upper bound was known on the \emph{representation number}, the smallest value $k$ such that the given graph is
$k$-word-representable.  Our bound on the representation number is tight up to a constant factor, as we construct graphs
with representation number $n/2$. We also show that deciding if a word-representable graph is $k$-word-representable is
NP-complete for $3 \le k \le n/2$.

One corollary of the structural result is that all 3-colorable graphs are word-representable graphs.  The containment
properly subsumes all previously known classes of word-representable graphs: outerplanar graphs, prisms, and comparability
graphs.  On the other hand, there are non-3-colorable graphs are not word-representable. \\

\noindent {\bf A motivating application.}
Consider a scenario with $n$ recurring tasks with requirements on the
alternation of certain pairs of tasks. This captures typical
situations in periodic scheduling, where there are recurring
\emph{precedence} requirements.

When tasks occur only once, the pairwise requirements form
precedence constraints, which are modeled by partial orders.
When the orientation of the constraints is omitted,
the resulting pairwise constraints form comparability graphs.
The focus of this paper is to study the class of undirected graphs
induced by the alternation relationship of recurring tasks.

Consider, e.g. the following five tasks that may be involved in 
operation of a given machine: 1) Initialize controller, 2) Drain
excess fluid, 3) Obtain permission from supervisor, 4) Ignite motor,
5) Check oil level. Tasks 1 \& 2, 2 \& 3, 3 \&4, 4 \& 5, and 5 \& 1
are expected to alternate between all repetitions of the events.
This is shown in Fig.~\ref{Fig1}(b), where each pair of alternating tasks is connected by an edge. One possible task execution
sequence that obeys these recurrence constraints -- and no other --
is shown in Fig.~\ref{Fig1}(a).
Later in the paper, we will introduce an orientation of
such graphs that we call a {\em semi-transitive orientation}; such an orientation for our example is shown in Fig.~\ref{Fig1}(c).

\begin{figure}[ht]
\begin{center}
\includegraphics[scale=0.6]{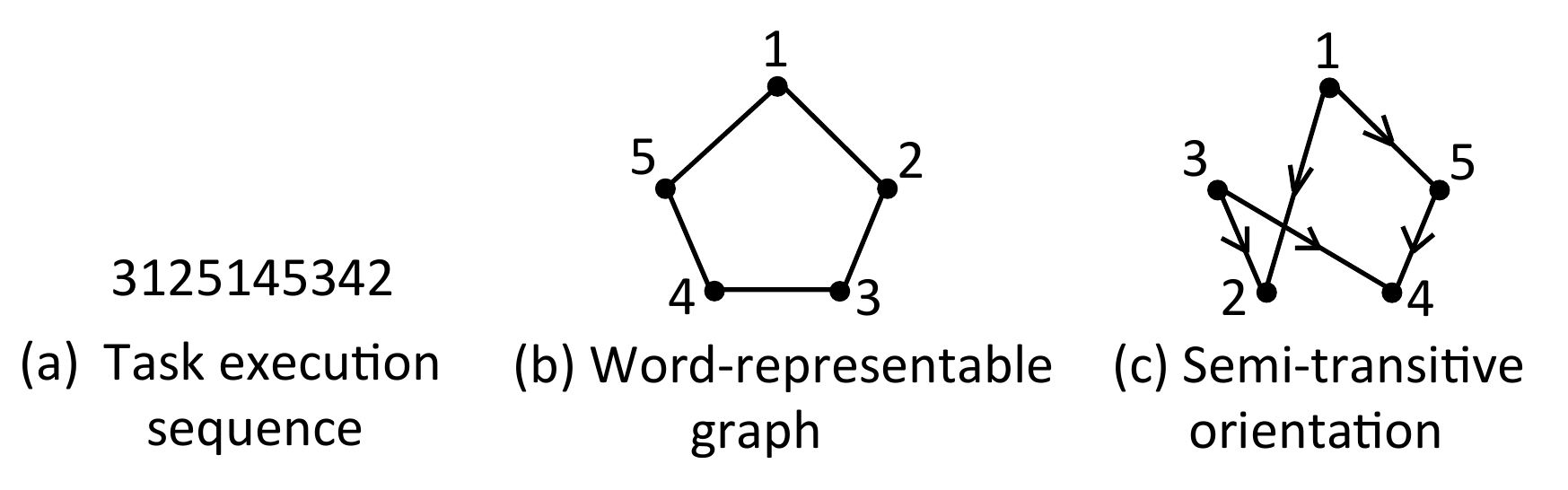}
\end{center}
\caption{The word in (a) corresponds to the word-representable graph in (b). A semi-transitive orientation of the graph is given in (c).}
\label{Fig1}
\end{figure}

Execution sequences of recurring tasks can be viewed as words over
an alphabet $V$, where $V$ is the set of tasks. \\


\noindent {\bf Related work.} The notion of directed word-representable graphs was introduced in~\cite{KS} to
obtain asymptotic bounds on the free spectrum of the widely-studied
{\em Perkins semigroup}, which has played central role in semigroup
theory since 1960, particularly as a source of examples and
counterexamples.
In \cite{KP}, numerous properties of word-representable graphs were derived
and several types of word-representable and non-word-representable graphs
pinpointed.
Some open questions from \cite{KP} were resolved recently in \cite{HKP}, including the representability of the Petersen graph.

Circle graphs were generalized to polygon-circle graphs (see \cite{Pergel}), which are the intersection graphs of polygons
inscribed in a circle. If we view each polygon as a letter and read the incidences of the polygons on the circle in
order, we see that two polygons intersect if and only if there \emph{exists} a pair of occurrences of the two polygons
that alternate. This differs from word-representable graphs where \emph{all} occurrences of the two letters must alternate in
order for the nodes to be adjacent.

Cyclic (or periodic) scheduling problems have been studied extensively
in the operations research literature
\cite{HanenMunier,Middendorf}, as well as in the AI
literature \cite{Draper}.
These are typically formulated with more general constraints, where,
e.g. the 10th occurrence of task A must be preceded by the 5th
occurrence of task B. The focus of this work is then on obtaining
effective periodic schedules, while maintaining a small cycle time.
We are, however, not aware of work on characterizing the graphs formed
by the cyclic precedence constraints.
A different periodic scheduling application related to word-representable graphs was
considered by Graham and Zang \cite{GZ}, whose work involves a counting
problem related to the cyclic movements of a robot arm. More
generally, given a set of jobs to be performed periodically, certain
pairs $(a,b)$ must be done alternately, e.g.\ since the product of
job $a$ is used as a resource for job $b$. Any valid execution
sequence corresponds to a word over the alphabet formed by the jobs.
The word-representable graph given by such a word must then contain the
constraint pairs as a subgraph.

The preliminary version of this work, that appeared in \cite{HKP2011}, claimed an upper bound of $n$ on the representation number.
This was based on a lemma (Lemma 2 in \cite{HKP2011}) that turned out to be false.
We present a corrected proof of the main result that gives a slightly weaker bound of $2n-4$ on the representation number. \\


\noindent {\bf Organization of the paper.} The paper is organized as follows. In Section~\ref{def}, we give
definitions of objects of interest and review some of the known
results.
In Section~\ref{char}, we give a characterization of word-representable
graphs in terms of orientations and discuss some important
corollaries of this fact. In Section~\ref{nice}, we examine the
representation number, and show that it is always at most $2n-4$, but can
be as much as $n/2$. We explore, in Section~\ref{classif}, which
classes of graphs are word-representable, and show, in particular,
that 3-colorable graphs are such graphs, but numerous other
properties are independent from the property of being word-representable. 
Finally, we conclude with 
two open problems in Section~\ref{open}.

\section{Definitions and Properties}
\label{def}

Let $W$ be a finite word.  If $W$ involves letters $x_1, x_2, \ldots, x_n$ then we write $A(W) = \{x_1, x_2,\ldots,
x_n\}$.  A word is $k$-\emph{uniform} if each letter appears in it exactly $k$ times. A 1-uniform word is also called a
{\it permutation}. Denote by $W_1W_2$ the concatenation of words $W_1$ and $W_2$. We say that  letters $x_i$ and
$x_j$ {\it alternate} in $W$ if the word induced by these two letters contains neither $x_ix_i$ nor $x_jx_j$ as a
factor. If a word $W$ contains $k$ copies of a letter $x$, then we denote these $k$ appearances of $x$ from left to right by $x^1,x^2,\ldots
,x^k$. We write $x_i^j<x_k^{\ell}$ if $x_i^j$ occurs in $W$ before $x_k^{\ell}$, i.e. $x_i^j$ is to the left of $x_k^{\ell}$ in $W$.

We say that a word $W$ \emph{represents} a graph $G=(V,E)$ if
there is a bijection $\phi:A(W)\rightarrow V$ such that
$(\phi(x_i),\phi(x_j))\in E$ if and only if $x_i$ and $x_j$
alternate in $W$.
We call a graph $G$ \emph{word-representable} if there
exists a word $W$ that represents $G$.
It is convenient to identify the vertices of a
word-representable graph and the corresponding letters of a word
representing it.
If $G$ can be represented by a
$k$-uniform word, then we say that $G$ is a \emph{$k$-word representable} graph and the word {\em $k$-represents} $G$.

The \emph{representation number} of a word-representable graph $G$
is the minimum $k$ such that $G$ is a $k$-word-representable graph. It follows from \cite{KP} that the representation number is well-defined for any word-representable graph.
We call a graph \emph{permutationally representable} if it can be
represented by a word of the form $P_1P_2\cdots P_k$, where each $P_i$
is a permutation over the same alphabet given by the graph vertices.

A digraph $D=(V,E)$ is {\em transitive} if the adjacency relation is
transitive, i.e.\ for any vertices $x,y,z\in V$, the existence of
the arcs $xy,yz\in E$ yields that $xz\in E$. A {\em comparability
graph} is an undirected graph that admits an orientation of the edges
that yields a transitive digraph.

The following properties of word-representable graphs and facts  from \cite{KP} are useful.
A graph $G$ is word-representable if and only if it is $k$-word representable for
some $k$.
If $W=AB$ is $k$-uniform word representing a graph $G$, then
the word $W'=BA$ also $k$-represents~$G$.

The wheel $W_5$ is the smallest
non-word-representable graph. Some examples of non-word-representable graphs on 6 and 7
vertices are given in Fig.~\ref{small}.

\begin{figure}[ht]
\begin{center}
\includegraphics[scale=0.6]{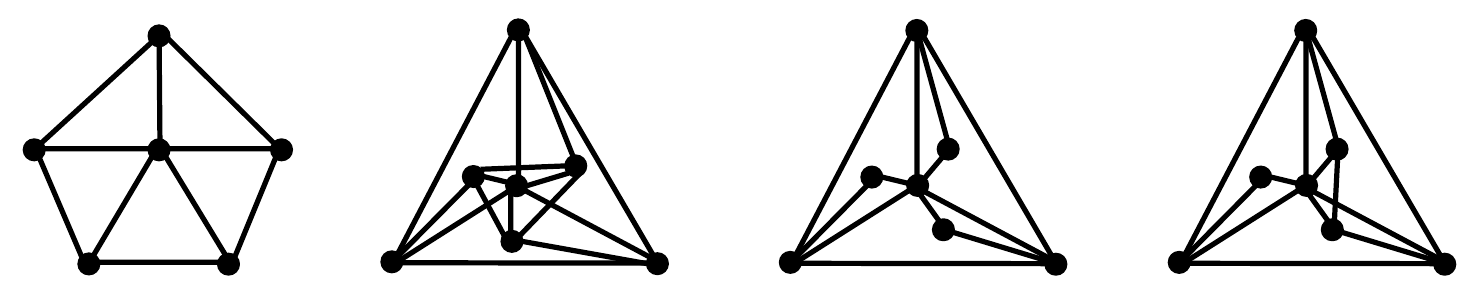}
\end{center}
\caption{Small non-word-representable graphs.}
\label{small}
\end{figure}


\section{Characterizing Word-Representable Graphs in Terms of Orientability}\label{char}

In this section we present a characterization,
which implies
that word-representability corresponds to a property of a digraph
obtained by directing the edges in a certain way.
It is known that a graph is permutationally representable 
if and only if it has a transitive orientation (i.e. is a comparability graph) \cite{KS}.
We prove a similar
fact on word-representable graphs, namely, that a graph is word-representable 
if and only if it has a certain {\em semi-transitive orientation} that we shall define.

Let $G=(V,E)$ be a graph.
An acyclic orientation of $G$ is \emph{semi-transitive} if
 for any directed path $v_1\rightarrow v_2\rightarrow \cdots \rightarrow v_k$ 
 either
\begin{itemize}
\item there is no arc $v_1\rightarrow v_k$, or
\item the arc $v_1\rightarrow v_k$ is present and there are arcs $v_i\rightarrow v_j$ for all $1\le i<j\le k$. That is,  in this case, the (acyclic) subgraph induced by the vertices $v_1,\ldots,v_k$ is a transitive clique (with the unique source $v_1$ and the unique sink $v_k$).
\end{itemize}
We call such an orientation a {\em semi-transitive orientation}. For example, the orientation of the graph in Figure \ref{fig:semi-transitive-orientation} is semi-transitive.
\begin{figure}[ht]
\begin{center}
\includegraphics[scale=0.6]{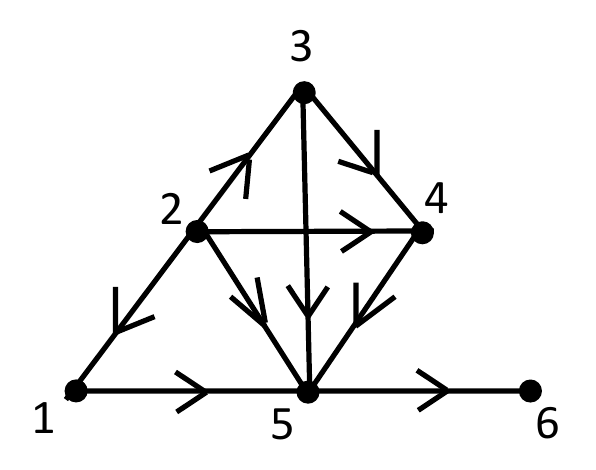}
\end{center}
\vspace{-20pt}
\caption{An example of a semi-transitive orientation.}
\label{fig:semi-transitive-orientation}
\end{figure}

A graph $G=(V,E)$ is \emph{semi-transitive} if it admits a semi-transitive orientation.

\begin{figure}[ht]
\begin{center}
\includegraphics[scale=0.6]{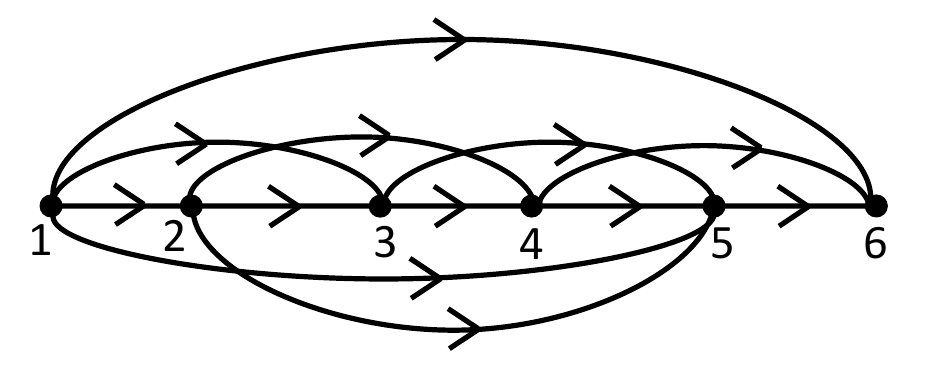}
\end{center}
\vspace{-20pt}
\caption{An example of a shortcut.}
\label{fig:shortcut}
\end{figure}

We can alternatively define semi-transitive orientations in terms of induced subgraphs.  A \emph{semi-cycle} is the directed
acyclic graph obtained by reversing the direction of one arc of a directed cycle. An acyclic digraph is a
\emph{shortcut} if it is induced by the vertices of a semi-cycle and contains a pair of non-adjacent vertices. Thus, a
digraph on the vertex set $\{ v_1, \ldots, v_k\}$ is a shortcut if it contains a directed path $v_1\rightarrow
v_2\rightarrow \cdots \rightarrow v_k$, the arc $v_1\rightarrow v_k$, and it is missing an arc $v_i\rightarrow v_j$
for some $1 \le i < j \le k$; in particular, we must have $k\geq 4$, so that any shortcut is on at least four
vertices. See Figure~\ref{fig:shortcut} for an example of a shortcut (there, the arcs $1\rightarrow 4$, $2\rightarrow
6$, and $3\rightarrow 6$ are missing). 

\begin{definition}
An orientation of a graph is semi-transitive, if it is acyclic and contains no shortcuts.
\end{definition}



For a word $W$, denote by $P(W)$ its {\it initial permutation}, i.e. the permutation obtained by removal from $W$ all but the first appearances of each letter. Let $D=(V,E)$ be an acyclic 
orientation of a graph $G$.
For vertices $u$ and $v$, let $u \leadsto v$ denote that there exists a directed path from $u$ to $v$ in $D$.
By definition, $u\leadsto u$.
We say that a permutation $P$ of the set $V$ is a {\it topological sort} of $D$ if for every distinct $u, v\in V$ such that $u \leadsto v$, the letter $u$ precedes $v$ in $P$.

We say that a word $W$ over the alphabet $V$ representing a graph $H$ \emph{covers} a set $A$ of non-edges of the digraph $D$ if the following four conditions hold:

1) $W$ is $k$-uniform for some $k$;

2) $P(W)$ is a topological sort of $D$;

3) $G$ is a subgraph of $H$;

4) each non-edge in $A$ is also a non-edge in $H$.

We need the following easy to prove lemma.

\begin{lemma}\label{covering}
  Let words $W_1$ and $W_2$ cover, respectively, sets $A$ and $B$ of non-edges of an acyclic digraph
  $D=(V,E)$. Then the word $W=W_1W_2$ covers the set of non-edges $A\cup B$. 
\end{lemma}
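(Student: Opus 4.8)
The plan is to verify the four defining conditions of ``$W$ covers $A\cup B$'' directly from the corresponding conditions for $W_1$ covering $A$ and $W_2$ covering $B$. Condition (1), $k$-uniformity of $W=W_1W_2$, is immediate: if $W_1$ is $k_1$-uniform and $W_2$ is $k_2$-uniform over the same alphabet $V$, then $W$ is $(k_1+k_2)$-uniform. (If the two uniformity parameters differ, one first pads the shorter word by repeating it, or the statement is read with the understanding that $W_1,W_2$ are already arranged to have a common alphabet $V$; in any case this is the routine bookkeeping step.)

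The crucial and slightly less trivial point is condition (2): $P(W)$ must be a topological sort of $D$. The key observation is that $P(W_1W_2)=P(W_1)$, since the initial permutation only records the \emph{first} occurrence of each letter, and every letter of $V$ already occurs in $W_1$ (as $W_1$ is $k_1$-uniform over $V$); hence the appearances in $W_2$ never become ``first'' occurrences. Therefore $P(W)=P(W_1)$, which is a topological sort of $D$ by hypothesis. I expect this to be the main (though still mild) obstacle, in the sense that it is the one place where one must use uniformity over the common alphabet rather than a merely formal concatenation argument.

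For conditions (3) and (4) I would argue at the level of the alternation relation. Let $H$ be the graph represented by $W=W_1W_2$, and let $H_1,H_2$ be the graphs represented by $W_1,W_2$. The induced subword of $W$ on any two letters $x,y$ is the concatenation of the induced subwords of $W_1$ and of $W_2$ on $x,y$. So if $x$ and $y$ alternate in $W$, then in particular neither $xx$ nor $yy$ appears as a factor within the $W_1$-part alone, nor within the $W_2$-part alone; hence $x,y$ alternate in each of $W_1$ and $W_2$, i.e. $E(H)\subseteq E(H_1)\cap E(H_2)$, equivalently every non-edge of $H_1$ or of $H_2$ is a non-edge of $H$. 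From $G\subseteq H_1$ and $G\subseteq H_2$ we only get $E(G)\subseteq E(H_1)\cap E(H_2)$, which does not immediately give $G\subseteq H$; so here one must use that $P(W_1)$ and $P(W_2)$ are \emph{both} topological sorts of $D$. For an edge $(x,y)$ of $G$, orient it in $D$, say $x\to y$; then $x$ precedes $y$ in $P(W_1)$ and in $P(W_2)$, so the first occurrence of $x$ precedes the first occurrence of $y$ in each of $W_1,W_2$. Combined with the fact that $x,y$ alternate within $W_1$ and within $W_2$ (since $x,y$ are adjacent in $H_1$ and $H_2$), a short case check on the four induced blocks $x^{\cdots}y^{\cdots}\mid x^{\cdots}y^{\cdots}$ shows the concatenated induced subword still alternates, giving $(x,y)\in E(H)$; this yields (3). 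Finally (4) is contained in the inclusion $E(H)\subseteq E(H_1)\cap E(H_2)$ established above: a non-edge of $A$ is a non-edge of $H_1$, hence a non-edge of $H$, and likewise for $B$ via $H_2$, so every non-edge of $A\cup B$ is a non-edge of $H$. Assembling (1)–(4) completes the proof.
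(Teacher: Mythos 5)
Your proposal is correct and follows essentially the same route as the paper's own proof: $P(W)=P(W_1)$ gives condition (2), non-alternation in either factor persisting in the concatenated induced subword gives condition (4), and the combination of uniformity, alternation in each $W_i$, and the topological-sort condition (forcing $x$ before $y$ in both parts, so each induced subword is $(xy)^{k_i}$) gives condition (3). The only superfluous bit is the parenthetical about ``padding'' in step (1): since $W_1$ and $W_2$ are uniform over the same alphabet $V$, their concatenation is automatically $(k_1+k_2)$-uniform and no adjustment is ever needed.
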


\begin{proof}
Let $W_1$ be $k$-uniform and $W_2$ be $\ell$-uniform. Then, clearly, $W$ is $(k+l)$-uniform. $P(W)$ is a topological sort of $D$ since $P(W)=P(W_1)$. If $uv\in A$ then $u$ and $v$ do not alternate in $W_1$. If $uv\in B$ then $u$ and $v$ do not alternate in $W_2$. So, for every $uv\in A\cup B$ the letters $u$ and $v$ cannot alternate in $W$. Finally, let $uv\in E(G)$. Then without loss of generality, it corresponds to an arc $u \rightarrow v$ in the digraph $D$. Then, by conditions 1)--3), the subwords of $W_1$ and $W_2$ induced by the letters $u$ and $v$ are alternating, starting with $u$ and ending with $v$. But then the same is true for the subword of $W$ induced by these letters, i.e. $u$ and $v$ alternate in $W$ and so, $uv\in E(H)$.
\end{proof}

Now we can prove our main technical lemma.

\begin{lemma}\label{lemma-nice}
  Let $D=(V,E)$ be a semi-transitively oriented graph and $v\in V$.  Then the non-edges incident with $v$ can be covered
  by a $2$-uniform word. 
\end{lemma}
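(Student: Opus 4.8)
The goal is to construct a $2$-uniform word $W$ whose initial permutation is a topological sort of $D$, which contains all edges of $G$, and in which $v$ fails to alternate with every non-neighbour of $v$. The natural starting point is to split the vertex set according to the position relative to $v$: let $O=\{u : u\leadsto v\}$ (the ``in-cone'' of $v$, including $v$), let $I=\{u : v\leadsto u\}$ (the ``out-cone'', including $v$), and let $R=V\setminus(O\cup I)$ be everything incomparable to $v$. Acyclicity guarantees $O\cap I=\{v\}$. The plan is to write $W = x_1\cdots$, where we place a topological sort of $O$ first, then $v$ again (or rather arrange the two copies of $v$ cleverly), then the vertices of $R$, then a topological sort of $I$; roughly $W$ looks like $P_O\, v\, P_R\, P_I'$ with the two occurrences of each letter distributed so that the whole word is $2$-uniform and $v$'s two occurrences straddle all of $R$. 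The point of putting both copies of $v$ ``around'' $R$ is exactly that then $v$ does not alternate with any vertex of $R$ (since each such vertex, to alternate with $v$, would need one copy inside and one outside the $v\cdots v$ block, but we will arrange all of $R$ strictly between the two $v$'s while its other copies sit on one fixed side), giving us the non-edges from $v$ to $R$ for free.

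The more delicate part is handling non-neighbours of $v$ that lie in $O$ or in $I$: a vertex $u\in O$ with $u\leadsto v$ but $uv\notin E$, and symmetrically in $I$. Here semi-transitivity is the crucial hypothesis. The idea is to process $O$ by a kind of greedy/recursive layering: pick the sources, then peel them off, and within each such step arrange copies of the letters so that precisely the vertices that are \emph{not} adjacent to $v$ get ``blocked'' while every edge is preserved. Concretely, I expect to order the two copies of each $u\in O$ so that the first copies appear in a fixed topological order $P_O$ (this secures condition 2 and half of each edge-alternation), and then choose the order of the \emph{second} copies so that for a non-neighbour $u$ of $v$, both of $u$'s copies end up on the same side of the relevant copy of $v$, whereas for a neighbour $u$ of $v$ the copies interleave correctly with $v$'s copies. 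Semi-transitivity is what makes this consistent: if $u\leadsto v$ and $uv\notin E$, then along any directed path $u\to\cdots\to v$ the absence of the arc $uv$ forces (by the shortcut-free property) that the path is not a ``chord-completing'' one, and more usefully, it constrains which vertices on such paths are adjacent to both $u$ and $v$, so we can consistently decide the side on which to drop $u$'s second copy without violating alternation for any actual edge.

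So the construction will be: (i) fix a topological sort $P$ of $D$; (ii) let $P_O, P_I$ be its restrictions to $O$ and to $I$; (iii) build a $2$-uniform word for $O\cup\{v\}$ covering the non-edges of $v$ within $O$, build a symmetric one for $\{v\}\cup I$ covering the non-edges within $I$, and a trivial one handling $R$ (where one copy of $v$ is placed before all of $R$ and one after, and each $r\in R$ gets both copies between them, or both outside — whichever makes $r$ fail to alternate with $v$); (iv) concatenate these, using Lemma~\ref{covering} to argue that the union of the covered non-edge sets is covered, and check that the concatenation can be made $2$-uniform overall by ensuring each of the three pieces contributes the right number of copies of the shared letter $v$ and that no letter other than $v$ is shared between pieces. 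The main obstacle, and the place where the earlier flawed argument presumably went wrong, is step (iii) for $O$ (equivalently $I$): getting a \emph{single} $2$-uniform word — not just some uniform word — that simultaneously (a) respects the topological order on first occurrences, (b) alternates $v$ with all its neighbours in $O$, and (c) blocks $v$ from all its non-neighbours in $O$. I would attack this by induction on $|O|$, removing a source $s$ of $D[O]$, applying the inductive word to $O\setminus\{s\}\cup\{v\}$, and then inserting the two copies of $s$: the first copy at the very front (preserving topological order), the second copy either immediately after the first copy of $v$ (if $sv\in E$, to keep alternation with $v$ and with all of $s$'s neighbours, which by semi-transitivity must then also be neighbours of $v$ and occur in the right places) or at the very front next to the first copy (if $sv\notin E$), and then verifying via the shortcut-free hypothesis that this single insertion breaks no existing edge. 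Carrying out that verification carefully — showing the inserted copy of $s$ lands on the correct side of every relevant letter — is the technical heart of the proof.
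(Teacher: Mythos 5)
Your plan stops short of a proof precisely at the point you yourself call ``the technical heart'': the placement of the second copies inside the in-cone and out-cone of $v$ is never actually carried out, and the insertion rule you sketch does not work. If $s$ is a source of the in-cone with $sv\notin E$, putting both copies of $s$ adjacent at the front makes $s$ alternate with nothing, so every edge from $s$ to its out-neighbours (which exist, e.g.\ on a path $s\rightarrow u\rightarrow v$ with no arc $s\rightarrow v$) is lost, violating condition 3) of the covering definition. A second structural problem is the assembly step: your three pieces live on overlapping sub-alphabets (all three contain $v$), so Lemma~\ref{covering} does not apply, plain concatenation is not $2$-uniform, and, worse, an edge of $G$ between vertices lying in different pieces --- e.g.\ an arc from an in-neighbour to an out-neighbour of $v$, which occurs in any triangle through $v$ --- would appear as $uuww$ in the concatenation and fail to alternate, again destroying condition 3).

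The idea you are missing is that a covering word is allowed to \emph{add} edges freely among vertices other than $v$: condition 4) only concerns the designated non-edges incident with $v$, so there is no need for the delicate interleaving inside the cones that your induction attempts. The paper exploits exactly this: it refines your three-way split into five sets, $I(v)$ and $O(v)$ (the in- and out-neighbours), $A(v)$ and $B(v)$ (the non-neighbours that reach, resp.\ are reached from, $v$), and $T(v)$ (the rest), takes topological sorts $A,I,T,O,B$ of these, and writes down the single explicit $2$-uniform word $W=A\,I\,T\,A\,v\,O\,I\,v\,B\,T\,O\,B$. This word represents the union of the three cliques $T\cup A\cup I$, $T\cup B\cup O$ and $I\cup O\cup\{v\}$, in which $v$ is non-adjacent to all of $A\cup B\cup T$; semi-transitivity enters only once, to show that $G$ has no edges from $A(v)$ to $O(v)\cup B(v)$ nor from $I(v)$ to $B(v)$ (any such arc would close a shortcut through $v$), so that $G$ really is a subgraph of this union of cliques, and a short case analysis shows $P(W)=AITvOB$ is a topological sort. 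Your route could in principle be repaired, but it would require solving a strictly harder interleaving problem than the lemma demands, and as written the two concrete steps above fail.
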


\begin{proof}
Let $I(v)=\{u:u\rightarrow v\}$ be the set of all in-neighbors of $v$, and $O(v)=\{u:v\rightarrow u\}$ be the set of all out-neighbors of $v$.
Also, let $A(v) = \{ u \in V : u \leadsto v\} \setminus I$
be the set of $v$'s non-neighboring vertices that can reach $v$, and $B(v) = \{ u \in V : v \leadsto u\} \setminus O$ be the set of
$v$'s non-neighboring vertices that can be reached from $v$.
Finally, let $T(v) = V \setminus (\{v\}\cup I(v) \cup O(v) \cup A(v) \cup B(v))$ be
the set of remaining vertices. 
Note that the sets $I(v)$, $O(v)$, $A(v)$, $B(v)$ and $T(v)$ are pairwise disjoint and some of them can be empty.

Denote by $A, B, I, O$ and $T$
topological sorts of the corresponding digraphs induced by the sets $A(v)$, $B(v)$, $I(v)$, $O(v)$ and $T(v)$, respectively.

We now consider the 2-uniform word $W$ given by
\[   W = A\  I\  T\  A\  v\  O\  I\  v\  B\  T\  O\  B.\]

We claim that $W$ covers all non-edges incident with $v$, i.e. the non-edges of type $vu$ for $u\in T(v)\cup A(v)\cup B(v)$.

Condition 1) holds automatically. Clearly, $W$ represents the graph $H$ that is
the union of the cliques $T\cup A\cup I,\ T\cup B\cup O$, and $I\cup O\cup \{v\}$. Since $v$ is not adjacent to each $u\in T\cup A\cup B$ in $H$, condition 4) is true.
Let us check condition 3). Indeed there are no edges connecting $v$ with $T(v)\cup A(v)\cup B(v)$ in $G$ by the definition. Note also that no arcs can go from $A(v)$ to $O(v) \cup B(v)$, or from $I(v)$ to $B(v)$ in $D$,
since that would induce a shortcut. Also, there are no arcs in the other direction,
since the digraph $D$ is acyclic. So, $G$ has no edges connecting $A(v)$ with $O(v) \cup B(v)$, or $I(v)$ with $B(v)$. Since all other edges exist in $H$, we have that $G$ is a subgraph of $H$, i.e. condition 3) holds. Finally, let us check condition 2). We have $P(W)=AITvOB$. Let $u$ and $w$ be distinct vertices such that $u \leadsto w$. If $u$ and $w$ are in the same set $A,I,T,O,$ or $B$ then $u$ precedes $w$ in $P(W)$ since the corresponding set is a topological sort. If neither $u$ nor $w$ is in $T$ then $u$ precedes $w$ in $P(W)$ because otherwise the digraph $D$ would contain a directed cycle. If $u\in T$ then $w$ cannot be in $A\cup I$ since otherwise there would be a directed path from $u$ to $v$ and thus $u$ must be also in $A\cup I$ by the definition. So, $u$ precedes $w$ in $P(W)$. Finally, if $w\in T$ then $u$ cannot be in $\{v\}\cup O\cup B$ since otherwise there would be a directed path from $v$ to $u$ and from $u$ to $w$ and thus $w$ must be in $O\cup B$ by the definition. So, $u\in A\cup I$ and hence $u$ precedes $w$ in $P(W)$. Therefore, $P(W)$ is a topological sort of $D$ and condition 2) is true.
\end{proof}



Now we are ready to prove the main result.

\begin{theorem}
A graph $G$ is word-representable if and only if it is semi-transitively orientable. Moreover, each non-complete word-representable graph is $2(n-\kappa)$-word-representable where $\kappa$ is the size of the maximum clique in~$G$.
\label{thm:rep-equals-semi-trans}
\end{theorem}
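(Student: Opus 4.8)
The plan is to prove the two directions separately, using Lemmas~\ref{covering} and~\ref{lemma-nice} to get both the ``semi-transitive $\Rightarrow$ word-representable'' implication and the quantitative bound, and a direct construction for the converse.

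\emph{Word-representable $\Rightarrow$ semi-transitively orientable.} Suppose $W$ is a word representing $G$; without loss of generality we may assume $W$ is $k$-uniform for some $k$. Orient each edge $uv$ of $G$ according to the order of the \emph{first} occurrences of $u$ and $v$ in $W$, i.e. orient $u\rightarrow v$ if $u^1 < v^1$. This orientation is acyclic because it is consistent with the linear order $P(W)$ on the vertices. To see that it is semi-transitive, consider a directed path $v_1\rightarrow v_2\rightarrow\cdots\rightarrow v_k$ such that the arc $v_1\rightarrow v_k$ is also present; I must show that $v_iv_j\in E$ for all $i<j$. The key observation is that if $x\rightarrow y$ is an arc (so $x,y$ alternate and $x^1<y^1$), then the $x,y$-induced subword is $xyxy\cdots xy$, so in particular every occurrence of $x$ precedes the corresponding occurrence of $y$ and they interleave perfectly. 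One then shows, by an interleaving argument along the path together with the alternation of $v_1$ and $v_k$, that any two vertices $v_i,v_j$ on the path must alternate in $W$ (their occurrences are ``squeezed'' between those of $v_1$ and $v_k$, forcing interleaving), hence $v_iv_j\in E$. This is the one step that requires a careful but elementary argument about positions of letters.

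\emph{Semi-transitively orientable $\Rightarrow$ word-representable, with the bound.} Fix a semi-transitive orientation $D$ of $G$, and let $C=\{c_1,\dots,c_\kappa\}$ be a maximum clique. By Lemma~\ref{lemma-nice}, for each vertex $v\in V\setminus C$ there is a $2$-uniform word $W_v$ covering all non-edges of $D$ incident with $v$. Applying Lemma~\ref{covering} repeatedly, the concatenation $W=\prod_{v\in V\setminus C} W_v$ (in any order) is a $2(n-\kappa)$-uniform word that covers the set $A$ of \emph{all} non-edges incident with some vertex outside $C$; moreover $P(W)$ is a topological sort of $D$ and $G$ is a subgraph of the graph $H$ represented by $W$. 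But every non-edge of $G$ has at least one endpoint outside $C$ (otherwise it would be a non-edge inside the clique $C$, contradiction), so $A$ is exactly the set of all non-edges of $G$; combined with $G\subseteq H$, this forces $H=G$. Hence $W$ is a $2(n-\kappa)$-uniform word representing $G$, proving both the implication and the moreover-clause.

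\emph{Remarks on the bound.} For $G$ non-complete we have $\kappa\le n-1$, so $2(n-\kappa)\ge 2$, and in the interesting range $\kappa\ge 2$ we get $2(n-\kappa)\le 2n-4$, recovering the $(2n-4)$-word-representability claimed in the introduction. I expect the main obstacle to be the first implication: verifying that the ``first-occurrence'' orientation has no shortcut requires tracking, for a path $v_1\to\cdots\to v_k$ with the extra arc $v_1\to v_k$, that the perfect interleaving of $v_1$ with $v_k$ propagates to force perfect interleaving of every intermediate pair. The converse direction is essentially bookkeeping, since Lemmas~\ref{covering} and~\ref{lemma-nice} have already done the real work.
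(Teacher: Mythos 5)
Your proposal is correct and follows essentially the same route as the paper: the forward direction uses the identical first-occurrence orientation together with the interleaving/squeeze argument (made legitimate by passing to a $k$-uniform word), and the converse concatenates the $2$-uniform covering words $W_v$ for the vertices outside a maximum clique, exactly as the paper does via Lemmas~\ref{covering} and~\ref{lemma-nice}, yielding the $2(n-\kappa)$ bound.
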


\begin{proof}
For the forward direction, given a word $W$, we direct an edge of $G$
from $x$ to $y$ if the first occurrence of $x$ is before that of $y$
in the word. Let us show that such an orientation $D$ of $G_W$ is
semi-transitive. Indeed, assume that $x_0x_t\in E(D)$ and there is a
directed path $x_0x_1\cdots x_t$ in $D$. Then in the word $W$ we
have $x_0^i<x_1^i<\cdots<x_t^i$ for every $i$. Since $x_0x_t\in
E(D)$ we have $x_t^i<x_0^{i+1}$. But then for every $j<k$ and $i$
there must be $x_j^i<x_k^i<x_j^{i+1}$, i.e. $x_ix_j\in E(D)$. So,
$D$ is semi-transitive.

The other direction follows directly from Lemmas~\ref{covering} and~\ref{lemma-nice}. Indeed, let $K$ be a maximum clique of $G$. Denote by $D$ a
semi-transitive orientation of the graph $G$.
Let $W_v$ be the 2-uniform word that covers all the non-edges in $G$
incident with a vertex $v \in V\setminus K$.
Concatenating $n-\kappa$ such words $W_v$ induces a word $W$ that covers all
non-edges in $G$ and preserves all edges. This follows from the fact that every non-edge has at least one endpoint outside $K$.  Thus, $G$ is represented by $W$. Moreover, this word is $2(n-\kappa)$-uniform, proving the second part of the theorem.
\end{proof}

Since each complete graph is 1-word-representable and each edgeless graph (having maximum clicks of size 1) is 2-word-representanle, we have the following statement.

\begin{corollary}\label{thm:di-to-string}
Each word-representable graph $G$ on $n\ge 3$ vertices is $2(n-2)$-word-representable.
\end{corollary}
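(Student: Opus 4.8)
The plan is to derive Corollary~\ref{thm:di-to-string} directly from Theorem~\ref{thm:rep-equals-semi-trans} by case analysis on the size $\kappa$ of a maximum clique of $G$. The theorem gives that every non-complete word-representable graph is $2(n-\kappa)$-word-representable, so the only issue is to check that the exponent $2(n-\kappa)$ never exceeds $2(n-2)$ when $n\ge 3$, and to handle separately the two boundary cases where the theorem's hypothesis ``non-complete'' fails or where $\kappa$ is too small to apply the bound.

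First I would dispose of the complete-graph case: if $G=K_n$, then $G$ is $1$-word-representable via the word $x_1x_2\cdots x_n$ (each letter once), and since any word-representable graph is $k$-word-representable for every $k$ above its representation number — more concretely, repeating a representing word $t$ times multiplies its uniformity by $t$ — we can pad this up to a $2(n-2)$-uniform representation whenever $2(n-2)\ge 1$, i.e. whenever $n\ge 3$ (for $n=3$, $2(n-2)=2\ge 1$). Actually the cleanest route is: $K_n$ is $2(n-2)$-word-representable because $1 \mid 2(n-2)$ is irrelevant; simply take the $1$-uniform word and repeat it $2(n-2)$ times. So complete graphs are fine for $n \ge 3$.

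Next I would handle the case $\kappa \ge 2$ for non-complete $G$: here Theorem~\ref{thm:rep-equals-semi-trans} applies and gives a $2(n-\kappa)$-uniform representation; since $\kappa \ge 2$ we have $2(n-\kappa) \le 2(n-2)$, and again by repeating the representing word an appropriate number of times (concretely, $2(n-2)/2(n-\kappa)$ need not be an integer, so instead I note that any $k$-word-representable graph is also $k'$-word-representable for every multiple... but to be safe, one uses the standard fact that if $G$ is $k$-representable it is $(k+k)$-representable by doubling, hence $tk$-representable for all $t\ge 1$, and more generally, from \cite{KP}, $G$ is $k'$-representable for all $k' \ge$ representation number — in any case, from a $2(n-\kappa)$-uniform word one obtains a $2(n-2)$-uniform word as long as one is willing to concatenate copies, and if exact uniformity $2(n-2)$ is needed one appeals to the cited monotonicity result). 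The only remaining case is $\kappa = 1$, i.e. $G$ is edgeless; then $\overline{K_n}$ is $2$-word-representable (e.g.\ $x_1x_2\cdots x_n x_n \cdots x_2 x_1$), and $2 \le 2(n-2)$ for $n\ge 3$, so again we are done.

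The main obstacle — really the only subtlety — is the bookkeeping about whether ``$k$-word-representable'' is monotone in $k$: the corollary asks for the specific value $2(n-2)$, not merely ``at most $2(n-2)$''. I would resolve this by invoking the fact already quoted in the excerpt from \cite{KP} that any word-representable graph is $k$-word-representable for some $k$ together with the standard closure property that concatenating a $k$-uniform representing word with itself yields a $2k$-uniform representing word of the same graph; combined with padding by permutations of the vertex set (which is always a valid sub-step when the graph has a Hamiltonian-free structure... more simply, the known result that the set of valid $k$ for a fixed graph is exactly $\{k : k \ge \mathrm{R}(G)\}$, where $\mathrm{R}(G)$ is the representation number) this lets us reach exactly $2(n-2)$ from any smaller feasible value. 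Everything else is the elementary inequality $\kappa \ge 1$ forcing $2(n-\kappa) \le 2(n-1) \le 2(n-2)$ — wait, that last step needs $\kappa\ge 2$; the $\kappa=1$ edgeless case is precisely why the corollary statement is phrased for $n\ge 3$ and is handled by the explicit $2$-uniform word above.
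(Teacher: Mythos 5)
Your proof is correct and takes essentially the same route as the paper, which likewise disposes of the two boundary cases (complete graphs are $1$-word-representable; edgeless graphs, where $\kappa=1$, are $2$-word-representable) and otherwise applies Theorem~\ref{thm:rep-equals-semi-trans} with $\kappa\ge 2$. The monotonicity point you labour over --- upgrading a $2(n-\kappa)$-uniform representation to an exactly $2(n-2)$-uniform one --- is left implicit in the paper as well, and is indeed settled by the known fact from \cite{KP} that a $k$-word-representable graph is $(k+1)$-word-representable.
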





\section{The Representation Number of Graphs}\label{nice}

We focus now on the following question: Given a word-representable graph,
how large is its representation number? In~\cite{KP}, certain
classes of graphs were proved to be 2- or 3-word representable, and an
example was given of a graph (the triangular prism) with the
representation number of 3. More on graphs with representation number 3 can be found in~\cite{K}. On the other hand, no examples were
known of graphs with representation numbers larger than 3, nor were
there any non-trivial upper bounds known.


Theorem~\ref{thm:rep-equals-semi-trans} implies that the graph
property of word-representability is polynomially verifiable, i.e. 
the recognition problem is in NP.
Limouzy \cite{L} observed that triangle-free representable graphs are precisely the
cover graphs, i.e. graphs that can be oriented as the diagrams of a partial order.
Determining whether a graph is a cover graph is NP-hard \cite{Brightwell,NR}, 
and thus it is also hard to determine if a given (triangle-free) graph is word-representable.
Thus, we obtain the following exact classification.

\begin{corollary}
The recognition problem for word-representable graphs is NP-complete.
\label{cor:inNP}
\end{corollary}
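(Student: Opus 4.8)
The plan is to establish membership in NP and NP-hardness separately, both essentially by citing machinery already in place. For membership in NP, I would appeal directly to Corollary~\ref{thm:di-to-string}: every word-representable graph on $n \ge 3$ vertices is $2(n-2)$-word-representable, so a certificate is simply a word $W$ over the alphabet $V$ in which each letter occurs at most $2(n-2)$ times (total length $O(n^2)$, hence polynomial). Verification amounts to checking, for every pair $x \ne y$, whether $x$ and $y$ alternate in $W$ and comparing this with the adjacency relation of $G$; this is clearly polynomial-time. Equivalently, one may phrase the certificate as a semi-transitive orientation $D$ of $G$ together with the observation (Theorem~\ref{thm:rep-equals-semi-trans}) that semi-transitivity of a given orientation is checkable in polynomial time — but checking "no shortcuts" naively ranges over paths, so the word certificate is the cleaner route and I would use it.

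For NP-hardness, I would invoke Limouzy's observation \cite{L}, already quoted in the excerpt, that the triangle-free word-representable graphs are exactly the cover graphs, together with the known NP-hardness of recognizing cover graphs \cite{Brightwell,NR}. Concretely: given an instance $G$ of the cover-graph recognition problem, we may assume $G$ is triangle-free (the hard instances in \cite{Brightwell,NR} are, or one reduces to that case); then $G$ is a cover graph if and only if $G$ is word-representable. This is a trivial (identity) reduction, so word-representability recognition is NP-hard. Combining the two parts gives NP-completeness.

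The main obstacle — really the only subtle point — is making sure the reduction from cover-graph recognition is legitimate, i.e. that the NP-hardness of cover graphs holds already on triangle-free graphs (so that Limouzy's equivalence "triangle-free word-representable $=$ cover graph" applies as a genuine if-and-only-if). Cover graphs are automatically triangle-free (a triangle cannot be the Hasse diagram of a poset because the long edge is implied by transitivity), so any reduction producing a cover-graph instance already lands among triangle-free graphs; the "no" instances must be checked to be triangle-free too, which holds in the constructions of \cite{Brightwell,NR}. Everything else is routine: the polynomial size bound from Corollary~\ref{thm:di-to-string} and the polynomial-time alternation check handle NP membership with no difficulty.
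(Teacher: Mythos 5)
Your proposal matches the paper's argument: NP membership via the polynomial-size word certificate guaranteed by the $2(n-2)$ bound of Corollary~\ref{thm:di-to-string}, and NP-hardness via Limouzy's identification of triangle-free word-representable graphs with cover graphs combined with the hardness of cover-graph recognition \cite{Brightwell,NR}. Your extra care about the reduction landing among triangle-free instances (or, equivalently, mapping triangle-containing inputs to a fixed non-word-representable graph) is a legitimate refinement of a point the paper states tersely, but the approach is the same.
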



We now show that there are graphs with representation number of $n/2$,
matching the upper bound within a factor of 4.

The \emph{crown graph} $H_{k,k}$ is the graph obtained from
the complete bipartite graph $K_{k,k}$ by removing a perfect
matching. Denote by $G_k$ the graph obtained from a crown
graph $H_{k,k}$ by adding a universal vertex (adjacent to all vertices in $H_{k,k}$).

\begin{theorem} \label{thm:example}
The graph $G_k$ has representation number $k=\lfloor n/2\rfloor$.
\end{theorem}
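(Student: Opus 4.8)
The plan is to establish the claim in two halves: an upper bound showing $G_k$ is $k$-word-representable, and a lower bound showing it cannot be represented by any $(k-1)$-uniform word. Note first that $G_k$ has $n = 2k+1$ vertices, so $k = \lfloor n/2 \rfloor$, and since $G_k$ contains triangles it is not a circle graph, ruling out representation number $1$ or $2$ for $k \geq 3$. For the upper bound, I would label the two sides of the crown $H_{k,k}$ as $x_1,\dots,x_k$ and $y_1,\dots,y_k$ with $x_i \not\sim y_i$ the removed matching, and let $z$ be the universal vertex. The idea is to build a $k$-uniform word by concatenating $k$ blocks, where block $i$ is a permutation-like segment engineered so that $x_i$ and $y_i$ fail to alternate exactly once (placing them ``adjacent'' with no other letter between their relevant occurrences) while every other pair of crown vertices, and every pair involving $z$, alternates throughout. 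Concretely, one natural attempt is to have $z$ appear once per block in a fixed relative position, and to cyclically rotate which pair $(x_i,y_i)$ gets ``broken'' in each block; verifying this is the bulk of the constructive work, and I expect the cleanest route is actually to exhibit a semi-transitive orientation of $G_k$ and then invoke the block-structure from the proof of Lemma~\ref{lemma-nice}, though a direct word construction is also plausible.

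For the lower bound I would argue by contradiction: suppose $W$ is a $p$-uniform word representing $G_k$ with $p \leq k-1$. The key structural fact to exploit is that $z$ is universal, so $z$ must alternate with every other letter; combined with $p$-uniformity this forces the occurrences of $z$ to partition $W$ into $p+1$ regions with strong constraints on where each $x_i$ and $y_i$ can sit relative to the copies of $z$ — roughly, each $x_i$ and each $y_i$ contributes exactly one occurrence strictly between consecutive $z$'s (or at the ends), since otherwise it would not alternate with $z$. This essentially reduces the problem to a ``permutation of $x$'s and $y$'s in each of the $p+1$ gaps'' picture. Then the crown structure enters: within this picture, $x_i$ and $y_i$ must be non-alternating (so they must ``collide'' in some gap, appearing consecutively among the crown letters in that gap), while $x_i$ alternates with all $y_j$ for $j \neq i$ and all $x_j$. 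A counting argument — there are $k$ pairs that must each be broken, but only $p \leq k-1$ gaps (or more precisely $p-1$ or so ``interior transitions'' available to break pairs without disturbing the others) — should yield the contradiction. The precise combinatorial accounting of how many distinct pairs can be simultaneously broken by a $p$-uniform word subject to all the alternation constraints is the crux.

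The main obstacle I anticipate is the lower bound's counting step: making rigorous the claim that a $p$-uniform word can ``break'' at most $p$ (or $p-1$, matching $k-1 < k$) of the crown's non-edges while keeping all the other crown edges and all edges to $z$ intact. One clean way to formalize this: fix attention on the subword induced by $z$ together with $x_i, y_i, x_j, y_j$ for two indices $i \neq j$; since $x_i \sim y_j$, $x_j \sim y_i$, $x_i \sim x_j$, $y_i \sim y_j$ but $x_i \not\sim y_i$, $x_j \not\sim y_j$, this $5$-vertex pattern (a $C_4$ plus a universal vertex — which is $G_2$, a subgraph of $W_5$-type obstructions) constrains consecutive-$z$-gaps so that the ``break'' of the pair $i$ and the ``break'' of the pair $j$ cannot be hidden in the same gap. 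Hence the $k$ breaks need $k$ distinct gaps, but a $p$-uniform word offers only $p$ internal positions of $z$ bounding at most... and one reconciles this with $p = k$ being exactly the threshold. Once that local-to-global step is nailed down, both bounds combine to give representation number exactly $k = \lfloor n/2 \rfloor$.
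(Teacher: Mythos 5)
Your opening reduction is sound and matches the paper's first step: the universal vertex forces any $p$-uniform representing word (after a cyclic shift so that it starts with $z$) to decompose into $p$ permutations of the crown's vertices, which is exactly Lemma~\ref{obs:rep-permrep}. From there, however, your plan has genuine gaps. First, you misstate the crown graph's edge set: you require ``$x_i$ alternates with all $y_j$ for $j\neq i$ \emph{and all $x_j$}'' and, in the upper bound, that ``every other pair of crown vertices alternates throughout.'' In $H_{k,k}$ the pairs $x_ix_j$ and $y_iy_j$ are \emph{non}-edges, so a word meeting your specification would represent the cocktail-party graph plus a universal vertex, not $G_k$; likewise your five-vertex pattern on $\{z,x_i,y_i,x_j,y_j\}$ is not $C_4$ plus a universal vertex but a matching of two edges plus a universal vertex, so the localization argument built on that pattern does not stand as written. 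Second, your notion of a pair being ``broken'' by colliding consecutively inside one gap is not the right one: in a concatenation $P_1\cdots P_p$ of permutations, $x_i$ and $y_i$ fail to alternate exactly when their relative order differs in two of the permutations, which need not involve any adjacency within a block. The correct counting statement (and the crux of the lower bound) is that, once one fixes the transitive orientation forced on the crown's edges (all $x$'s before all non-matched $y$'s in every permutation), no single permutation can reverse two distinct matched pairs; this is precisely the paper's poset-dimension argument (Lemmas~\ref{obs:dimension} and~\ref{cocktail}), and your gap-counting would need to be replaced by it.

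Third, the upper bound is not actually established: the $k$-uniform word is only sketched, and your fallback of exhibiting a semi-transitive orientation and invoking Lemma~\ref{lemma-nice} cannot work, since that machinery yields a $2(n-\kappa)$-uniform word (roughly $4k$ letters per vertex), far weaker than the required $k$. The paper instead produces $k$ explicit linear orders realizing the crown poset (second half of Lemma~\ref{cocktail}) and converts them into a permutational $k$-representation of $H_{k,k}$, hence a $k$-representation of $G_k$. A minor further slip: containing triangles does not preclude being a circle graph, so your remark ruling out representation number $2$ needs the lower bound itself (which covers it once $k\geq 3$).
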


The proof is based on three statements.

\begin{lemma}
  Let $H$ be a graph and $G$ be the graph obtained from $H$ by adding an all-adjacent vertex.  Then $G$ is a
  $k$-word-representable graph if and only if $H$ is a permutational $k$-word-representable graph.
\label{obs:rep-permrep}
\end{lemma}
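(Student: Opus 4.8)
The plan is to prove both directions by relating $k$-word-representability of $G$ to \emph{permutational} $k$-word-representability of $H$ via the position of the all-adjacent vertex in each block of the word. Write $u$ for the all-adjacent vertex, so $V(G) = V(H) \cup \{u\}$. First I would set up the correspondence: if $W$ is a $k$-uniform word $k$-representing $G$, then since $u$ is adjacent to every other vertex, $u$ must alternate with every letter of $H$ in $W$; in particular the occurrences of $u$ partition $W$ (outside the $u$'s) into $k$ blocks, and alternation of $u$ with each $x \in V(H)$ forces each block to contain exactly one occurrence of each letter of $H$. Hence, deleting the $u$'s, $W$ restricts to a word $P_1 P_2 \cdots P_k$ where each $P_i$ is a permutation of $V(H)$. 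The key observation is that two letters $x, y \in V(H)$ alternate in $W$ if and only if they alternate in $P_1 \cdots P_k$ (the $u$'s do not affect the $x,y$-induced subword), so $P_1 \cdots P_k$ is a permutational $k$-representation of $H$.

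For the converse, suppose $H$ is permutationally $k$-word-representable by $P_1 P_2 \cdots P_k$ with each $P_i$ a permutation of $V(H)$. I would form $W = u P_1 u P_2 \cdots u P_k$ (inserting one copy of $u$ before each block; one should check the boundary convention, but placing $u$ at the start of each block makes $u$ alternate with every $x \in V(H)$ since between consecutive $u$'s there is exactly one $x$, and the $x,y$-subword is unchanged). Then $W$ is $k$-uniform, $u$ alternates with all letters of $H$, so $u$ is adjacent to everything, and two letters of $H$ alternate in $W$ iff they alternate in $P_1 \cdots P_k$ iff they are adjacent in $H$. Thus $W$ $k$-represents $G$.

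The two directions together give the equivalence. The main obstacle — really the only subtle point — is the first direction: one must argue carefully that in \emph{any} $k$-uniform word representing $G$, the copies of the all-adjacent vertex $u$ are forced to be evenly spaced so that the blocks between them are genuine permutations of $V(H)$, rather than just showing this for some convenient representation. This follows because alternation of $u$ with each fixed $x$ forces exactly one $x$ between any two cyclically-consecutive $u$'s (and, since $W$ is $k$-uniform with $k$ copies of $u$ and $k$ copies of $x$, exactly one $x$ in each of the $k$ gaps determined by the $u$'s), but it must be stated for all $x$ simultaneously to conclude each block is a full permutation. Everything else is a routine check that alternation among letters of $H$ is insensitive to insertion or deletion of the $u$'s.
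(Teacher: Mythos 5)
Your overall strategy coincides with the paper's (delete the all-adjacent letter in one direction, insert it in front of each permutation in the other), and your converse direction is correct. The forward direction, however, has a genuine gap: you claim that in an \emph{arbitrary} $k$-uniform word $W$ representing $G$ the occurrences of $u$ cut $W$ into $k$ blocks, each containing every letter of $H$ exactly once, so that deleting the $u$'s yields $P_1P_2\cdots P_k$. Read linearly this is false: $k$ occurrences of $u$ create $k+1$ segments, and letters whose subword with $u$ has the pattern $xuxu\cdots xu$ behave differently from those with pattern $uxux\cdots ux$. Concretely, let $H$ consist of two non-adjacent vertices $x,y$ and let $u$ be the added all-adjacent vertex; then $W=xuxyuy$ is a $2$-uniform word representing $G$, but deleting the $u$'s gives $xxyy$, which is not a concatenation of two permutations of $\{x,y\}$. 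Your observation that every pair of \emph{cyclically} consecutive $u$'s has exactly one occurrence of each letter between them is correct, but it only shows the blocks are permutations cyclically; by itself it does not produce a linear word of the form $P_1\cdots P_k$ representing $H$, which is what permutational $k$-representability requires.

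The missing ingredient is the normalization the paper performs: one may assume that $W$ begins with the all-adjacent letter, because for a $k$-uniform word $W=AB$ the cyclic shift $BA$ represents the same graph (this fact from \cite{KP} is quoted in Section~\ref{def}). After this rotation the $k$ occurrences of $u$ do cut the word into $k$ segments, each a permutation of $V(H)$, and the rest of your argument goes through verbatim (equivalently, one may rotate the $u$-deleted word, which is $k$-uniform and represents $H$, until it becomes a concatenation of permutations). Without invoking this cyclic-shift property, the step ``hence deleting the $u$'s yields $P_1P_2\cdots P_k$'' is unjustified and, as the example shows, false for some representing words; with it, the gap closes easily and your proof matches the paper's.
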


\begin{proof}
Let 0 be the letter corresponding to the all-adjacent vertex. Then
every other letter of the word $W$ representing $G$ must appear
exactly once between two consecutive zeroes. We may assume also that
$W$ starts with 0. Then the word $W\setminus \{0\}$, formed by deleting
all occurrences of 0 from $W$, is a permutational $k$-representation of $H$.
Conversely, if $W'$ is a
word permutationally $k$-representing $H$, then we insert 0 in front
of each permutation to get a (permutational) $k$-representation of $G$.
\end{proof}

Recall that the {\em order dimension} of a poset is the minimum
number of linear orders such that their intersection induces this
poset.

\begin{lemma}
  A comparability graph is a permutational $k$-word-representable graph if and only if the poset induced by this graph has
  dimension at most $k$.
\label{obs:dimension}
\end{lemma}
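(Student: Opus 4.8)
The plan is to directly translate between the combinatorial data on both sides: a permutational $k$-representation of a comparability graph $G$ and a realizer of size $k$ of the poset $P$ that $G$ induces. Recall that $G$ is a comparability graph means it admits a transitive orientation, which is exactly a strict partial order $P=(V,<)$ whose comparability relation is the edge set of $G$; fix such a $P$. A permutational $k$-representation of $G$ is, by definition, a word $W=P_1P_2\cdots P_k$ with each $P_i$ a permutation of $V$, and with the property that two vertices $x,y$ alternate in $W$ if and only if $xy\in E(G)$ if and only if $x<y$ or $y<x$ in $P$.

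The first step is the forward direction. Suppose $W=P_1\cdots P_k$ permutationally $k$-represents $G$. Each permutation $P_i$ is a linear order $L_i$ on $V$. I claim the $L_i$ form a realizer of $P$, i.e. $x<y$ in $P$ iff $x<_{L_i}y$ for all $i$, which shows $\dim(P)\le k$. The key point is that, because all the $P_i$ range over the same alphabet $V$, two letters $x,y$ alternate in $W$ precisely when they occur in the same relative order in every $P_i$; so "$x,y$ comparable in $P$" $\iff$ "$x,y$ alternate in $W$" $\iff$ "$x,y$ are ordered the same way in all $L_i$", and when they are comparable the common order must agree with $P$ (otherwise $P$'s orientation and this one would disagree, but the alternation only records relative order, so we may assume WLOG the word is written compatibly with the transitive orientation $P$ — or simply observe that the common linear order on $\{x,y\}$ extends to a consistent order, and one checks it matches $<$). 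Also each $L_i$ must extend $P$: if $x<y$ in $P$ then $x,y$ alternate, so their common order in every $P_i$ is the unique one, and it must be the $P$-order; hence $L_i$ is a linear extension of $P$. Together with the previous claim this shows $\{L_i\}$ is a realizer, so $\dim(P)\le k$.

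The second step is the converse. Suppose $\dim(P)\le k$, and let $L_1,\dots,L_k$ be linear extensions of $P$ with $P=\bigcap_i L_i$ (pad with extra copies of any $L_i$ if fewer than $k$ are needed). Let $P_i$ be the permutation of $V$ listing its elements in $L_i$-order and set $W=P_1\cdots P_k$. Then $W$ is $k$-uniform, and for $x\ne y$, $x$ and $y$ alternate in $W$ iff they appear in the same order in every $P_i$ iff they are comparable in every $L_i$ in the same way iff $x<y$ in $P$ or $y<x$ in $P$ (using $P=\bigcap L_i$) iff $xy\in E(G)$. Hence $W$ permutationally $k$-represents $G$.

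The main obstacle — really the only subtlety — is pinning down, in the forward direction, that the common relative order of a comparable pair in the word agrees with the chosen transitive orientation $P$, rather than with its reverse; this is handled by noting that the transitive orientation is recovered from $W$ by the rule "orient $xy$ according to the order of first occurrences" (as in the forward direction of Theorem~\ref{thm:rep-equals-semi-trans}), which is consistent across all $P_i$ for comparable pairs, so one simply takes $P$ to be that orientation. Everything else is the routine observation that alternation of two letters in a concatenation of permutations over a fixed alphabet is equivalent to those two letters having a fixed relative order throughout.
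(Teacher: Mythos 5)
Your proof is correct and follows essentially the same route as the paper: permutations of the word are read as linear orders, alternation in a concatenation of permutations over a fixed alphabet is identified with having the same relative order in every permutation, and the intersection of these orders realizes the poset, while the converse concatenates the permutations of a realizer. You in fact spell out both directions (the paper's written proof only details the forward one, with the converse implicit from the introduction) and handle the orientation subtlety at least as carefully as the paper does.
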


\begin{proof}
Let $H$ be a comparability graph and $W$ be a word permutationally
$k$-representing it.  Each permutation in $W$ can be considered as a
linear order where $a<b$ if and only if $a$ meets before $b$ in the permutation.
We want to show that the comparability graph of the poset
induced by the intersection of these linear orders coincides with $H$.

Two vertices $a$ and $b$ are adjacent in $H$ if and only if their
letters alternate in the word. So, they must be in the same order in
each permutation, i.e. either $a<b$ in every linear order or $b<a$
in every linear order. But this means that $a$ and $b$ are comparable in the
poset induced by the intersection of the linear orders, i.e. $a$ and
$b$ are adjacent in its comparability graph.
\end{proof}

The next statement most probably is known but we give its proof here for
the sake of completeness.

\begin{lemma} \label{cocktail}
  Let $P$ be the poset over the $2k$ elements $\{a_1,a_2,\ldots,a_k,b_1,b_2,\ldots,b_k\}$ such that $a_i<b_j$ for every $i\ne j$
  and all other elements are not comparable.
  Then, $P$ has dimension $k$.
\end{lemma}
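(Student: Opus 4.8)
The plan is to prove the two inequalities $\dim(P)\le k$ and $\dim(P)\ge k$ separately. For the upper bound, I would exhibit $k$ linear extensions of $P$ whose intersection is exactly $P$. The natural choice is, for each $i\in\{1,\dots,k\}$, a linear order $L_i$ that ``isolates'' the incomparable pair $(a_i,b_i)$ by putting $b_i$ below $a_i$, while respecting all the forced relations $a_j<b_\ell$ ($j\ne\ell$); for instance take
\[
L_i:\quad a_1,a_2,\dots,a_k,\ b_i,\ b_1,b_2,\dots,\widehat{b_i},\dots,b_k,
\]
(with $b_i$ slotted just after the block of $a$'s and before the remaining $b$'s, and $\widehat{\phantom{b}}$ denoting omission). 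One checks that each $L_i$ extends $P$ (all $a_j$ precede all $b_\ell$ except that $b_i$ precedes every $a_j$ — but this is fine since $b_i$ is incomparable to every $a_j$ in $P$ when... wait, $a_j<b_i$ for $j\ne i$). Let me instead reverse: put $b_i$ at the very bottom, i.e. $L_i: b_i, a_1,\dots,a_k, b_1,\dots,\widehat{b_i},\dots,b_k$. Then $L_i$ is a linear extension of $P$ since the only $P$-relations involving $b_i$ are $a_j<b_i$ for $j\ne i$, and those are being reversed — so this is not an extension either. The correct construction is $L_i: a_i,\ b_1,b_2,\dots,b_k,\ a_1,\dots,\widehat{a_i},\dots,a_k$, putting $a_i$ at the top. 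Here the $P$-relations involving $a_i$ are $a_i<b_j$ for $j\ne i$, all satisfied; the relations among the other elements ($a_j<b_\ell$ for $j\ne\ell$, $j,\ell\ne i$) are all satisfied because every $b$ precedes every $a_j$ with $j\ne i$ — again not an extension. So the honest statement is: take $L_i$ to be any linear extension of $P$ in which $a_i$ and $b_i$ appear consecutively with $b_i$ immediately followed by $a_i$ (possible since $a_i,b_i$ are incomparable and form no obstruction); then in $\bigcap_i L_i$ the pair $(a_i,b_i)$ is incomparable for every $i$, and every forced pair $a_j<b_\ell$ is preserved, so the intersection is $P$, giving $\dim(P)\le k$.

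For the lower bound $\dim(P)\ge k$, the key observation is that in any linear extension $L$ of $P$, for each index $i$ we have either $a_i<_L b_i$ or $b_i<_L a_i$, and I claim no single linear extension can have $b_i<_L a_i$ for two distinct indices $i$ and $j$ simultaneously. Indeed, if $b_i<_L a_i$ and $b_j<_L a_j$ with $i\ne j$, then using the $P$-relations $a_i<b_j$ and $a_j<b_i$ (both forced since $i\ne j$), transitivity in $L$ gives $b_i<_L a_i<_L b_j<_L a_j<_L b_i$, a contradiction. Hence each linear extension ``reverses'' at most one of the $k$ incomparable pairs $(a_i,b_i)$. Since the intersection of a family of linear extensions realizes $P$ only if every incomparable pair $(a_i,b_i)$ is reversed by at least one member of the family (otherwise $a_i<b_i$ would hold in the intersection), we need at least $k$ linear extensions. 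Therefore $\dim(P)\ge k$, and combined with the upper bound, $\dim(P)=k$.

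The main obstacle — more a point requiring care than a genuine difficulty — is getting the upper-bound construction exactly right: one must verify that placing $a_i$ and $b_i$ adjacent (in the order $b_i$ then $a_i$) is compatible with a linear extension of $P$, i.e. that this does not force a cycle with the remaining forced relations. This follows because $\{a_i,b_i\}$ is an antichain and every other element is either above $b_i$ or below $a_i$ or incomparable to both in a way that leaves room; concretely one can write down the extension explicitly as $a_1,\dots,a_{i-1},a_{i+1},\dots,a_k$ (the $a$'s other than $a_i$), then $b_i$, then $a_i$, then $b_1,\dots,b_{i-1},b_{i+1},\dots,b_k$, and check all $k(k-1)$ forced inequalities hold. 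The lower bound argument, by contrast, is a clean transitivity/pigeonhole argument and should go through immediately.
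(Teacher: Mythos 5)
Your proposal is correct and follows essentially the same route as the paper: the lower bound is the identical argument that no linear extension can reverse two pairs $(a_i,b_i)$ and $(a_j,b_j)$ simultaneously because the forced relations $a_i<b_j$ and $a_j<b_i$ would create a cycle, and the upper bound uses $k$ explicit linear extensions each reversing exactly one pair by placing $b_i$ immediately before $a_i$ (the paper's orders $a_1<\cdots<a_{k-1}<b_k<a_k<b_{k-1}<\cdots<b_1$ and their swaps differ from yours only in inessential details). Just tidy up the exploratory false starts in the upper-bound paragraph, and when checking that the intersection equals $P$ also record that the pairs $(a_i,a_j)$ and $(b_i,b_j)$ come out incomparable (they do in your construction, since $L_i$ and $L_j$ order them oppositely).
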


\begin{proof}
Assume that this poset is the intersection of $t$ linear orders.
Since $a_i$ and $b_i$ are not comparable for each $i$, their must be
a linear order where $b_i<a_i$. If we have in some linear order both
$b_i<a_i$ and $b_j<a_j$ for $i\ne j$, then either $a_i<a_j$
or $a_j<a_i$ in it. In the first case we have that $b_i<a_j$, in the
second that $b_j<a_i$. But each of these inequalities contradicts
the definition of the poset. Therefore, $t\ge k$.

In order to show that $t=k$ we can consider a linear order
$a_1<a_2<\ldots<a_{k-1}<b_k<a_k<b_{k-1}<\ldots<b_2<b_1$ together
with all linear orders obtained from this order by the simultaneous
exchange of $a_k$ and $b_k$ with $a_m$ and $b_m$ respectively
($m=1,2,\ldots,k-1$). It can be verified that the intersection of
these $k$ linear orders coincides with our poset.
\end{proof}

Now we can prove Theorem~\ref{thm:example}. Since the crown graph
$H_{k,k}$ is a comparability graph of the poset $P$, we deduce from
Lemmas~\ref{cocktail} and~\ref{obs:dimension} that $H_{k,k}$ is
a permutational $k$-word-representable graph but not a permutational
$(k-1)$-word-representable graph. Then by Lemma~\ref{obs:rep-permrep} we have that
$G_k$ is a $k$-word-representable graph but not a $(k-1)$-word-representable graph.
Theorem~\ref{thm:example} is proved. \qed
\medskip

The above arguments help us also in deciding the complexity of
determining the representation number. From Lemmas
\ref{obs:rep-permrep} and \ref{obs:dimension}, we see that it is as
hard as determining the dimension $k$ of a poset. Yannakakis
\cite{Yann} showed that the latter is NP-hard, for any $3 \le k \leq
\lceil n/2\rceil$. We therefore obtain the following result.

\begin{proposition} \label{prop1}
  Deciding whether a given graph is a $k$-word-representable graph, for any given $3\leq k\leq\lceil n/2 \rceil$, is
  NP-complete.
\end{proposition}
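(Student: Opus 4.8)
The plan is to reduce the complexity statement directly to the known NP-hardness of computing poset dimension, via the chain of lemmas established just before the proposition. Concretely, to decide membership in NP: by Corollary~\ref{thm:di-to-string}, any word-representable graph on $n$ vertices has a $2(n-2)$-uniform representation, so a word of polynomial length serves as a certificate, and checking that a word represents a given graph is done by verifying the alternation condition for each pair of letters in polynomial time. Thus ``$G$ is $k$-word-representable'' is in NP for every fixed (or polynomially bounded) $k$, since a valid $k$-uniform word has length $kn \le n^2/2$.

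For NP-hardness, I would start from an instance of the poset dimension problem: given a poset $Q$ on $m$ elements and an integer $k$ with $3 \le k \le \lceil m/2 \rceil$, decide whether $\dim(Q) \le k$. Yannakakis~\cite{Yann} showed this is NP-hard in this range. From $Q$, form its comparability graph $H$, and then let $G$ be the graph obtained from $H$ by adding an all-adjacent vertex; this construction is clearly polynomial. By Lemma~\ref{obs:rep-permrep}, $G$ is $k$-word-representable if and only if $H$ is permutationally $k$-word-representable, and by Lemma~\ref{obs:dimension} the latter holds if and only if the poset induced by $H$ — which is $Q$ — has dimension at most $k$. Hence $G$ is $k$-word-representable if and only if $\dim(Q) \le k$, so the reduction is correct.

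One point that needs a little care is the parameter range: Lemma~\ref{obs:dimension} speaks of the comparability graph's induced poset, but different transitive orientations of $H$ can give posets of different dimension. This is harmless here because we construct $H$ as the comparability graph of the specific poset $Q$ and we may, as part of the instance, fix the orientation; alternatively, one notes that all transitive orientations of a connected comparability graph are related by reversal and by the modular decomposition, and dimension is invariant under arc-reversal. The other bookkeeping point is matching the bound $k \le \lceil n/2 \rceil$ with Yannakakis's bound $k \le \lceil m/2 \rceil$: since $G$ has $n = m+1$ vertices, the range $3 \le k \le \lceil m/2 \rceil$ is contained in $3 \le k \le \lceil n/2 \rceil$, so the hardness transfers over the full claimed interval.

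The main obstacle, such as it is, is purely expository rather than mathematical: making sure the reader sees that the role of ``$k$'' is preserved exactly through both lemmas (no off-by-one slippage between $k$-uniform words, numbers of permutations, and linear extensions), and that the NP membership argument genuinely uses the polynomial representation-number bound from Corollary~\ref{thm:di-to-string} rather than assuming it. Once those two ingredients are in place, the proof is a short composition: NP membership from the length bound, NP-hardness by the reduction $Q \mapsto H \mapsto G$ through Lemmas~\ref{obs:rep-permrep} and~\ref{obs:dimension} and Yannakakis's theorem.
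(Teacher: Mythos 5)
Your proposal is correct and follows essentially the same route as the paper: NP membership from the polynomial length of a $k$-uniform certificate, and NP-hardness by composing Lemmas~\ref{obs:rep-permrep} and~\ref{obs:dimension} with Yannakakis's hardness of poset dimension (adding a universal vertex to the comparability graph of the poset). In fact you are more careful than the paper's terse argument on two points it glosses over: the well-definedness of dimension across transitive orientations of the comparability graph, and the $n=m+1$ offset in the parameter range.
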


It was shown that it is also NP-hard to approximate the dimension of a poset within
$n^{1/2-\epsilon}$-factor \cite{HJ}, and this has recently been strengthened to  and
$n^{1-\epsilon}$-factor \cite{CLN13}.
We therefore obtain the same hardness for the representation number.

\begin{proposition}
  Approximating the representation number within $n^{1-\epsilon}$-factor is NP-hard, for any $\epsilon > 0$. That is, for every constant $\epsilon >
0$, it is an NP-complete problem to decide whether
a given word-representable graph has representation number at most
$n^\epsilon$, or it
has representation number greater than $n^\epsilon$.
\label{prop:approx}
\end{proposition}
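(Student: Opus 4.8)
The plan is to turn the known inapproximability of order dimension into inapproximability of the representation number via the universal-vertex construction. Starting from a poset $P$ on $m$ elements, let $H$ be the comparability graph of $P$ and let $G$ be obtained from $H$ by adding an all-adjacent vertex. Since $H$ is a comparability graph it is permutationally representable, so $G$ is word-representable by Lemma~\ref{obs:rep-permrep}; more precisely, combining Lemma~\ref{obs:rep-permrep} with Lemma~\ref{obs:dimension} shows that $G$ is $k$-word-representable if and only if the poset induced by $H$ has dimension at most $k$, and hence the representation number of $G$ equals $\dim(P)$ (using that dimension is a comparability invariant). Thus $P\mapsto G$ is a polynomial-time reduction that transports the quantity $\dim(P)$ to the representation number of a word-representable graph on $n=m+1$ vertices; in particular it is gap-preserving, up to the harmless additive change in the vertex count.

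Next I would invoke the hardness result of \cite{CLN13} in its gap formulation: for every $\epsilon>0$ it is NP-hard, given a poset on $m$ elements, to distinguish the case $\dim(P)\le m^{\epsilon}$ from the case $\dim(P)> m^{1-\epsilon}$ (this gap underlies the claimed $m^{1-\epsilon}$-inapproximability of dimension). Passing such an instance through the reduction yields a word-representable graph $G$ on $n=m+1$ vertices whose representation number is at most $m^{\epsilon}$ in the first case and greater than $m^{1-\epsilon}$ in the second. Choosing the parameter of \cite{CLN13} slightly smaller to absorb the replacement of $m$ by $n-1$, we conclude that it is NP-hard to decide whether a given word-representable graph has representation number at most $n^{\epsilon}$ or greater than $n^{\epsilon}$; consequently, any polynomial-time algorithm approximating the representation number within an $n^{1-\epsilon}$-factor would resolve this gap, so no such algorithm can exist unless $\mathrm{P}=\mathrm{NP}$.

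To upgrade NP-hardness of the threshold decision problem to NP-completeness, I would check membership in NP. A witness that a word-representable graph $G$ on $n$ vertices has representation number at most $n^{\epsilon}$ is a $k$-uniform word over $V(G)$ with $k\le n^{\epsilon}$ that represents $G$; such a word has length $kn\le n^{1+\epsilon}$, hence is a polynomial-size certificate, and verifying that two letters alternate in it precisely when the corresponding vertices are adjacent takes polynomial time. (By Corollary~\ref{thm:di-to-string} the bound $n^{\epsilon}$ is not vacuous only in the nontrivial range, since every word-representable graph on $n\ge 3$ vertices is already $(2n-4)$-word-representable.) Together with the NP-hardness above, this gives Proposition~\ref{prop:approx}.

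The reduction itself is immediate from Lemmas~\ref{obs:rep-permrep} and~\ref{obs:dimension}, so the only point requiring any care is matching the precise gap statement of \cite{CLN13} to the threshold $n^{\epsilon}$ and absorbing the $+1$ in the number of vertices; this is routine bookkeeping and presents no genuine obstacle.
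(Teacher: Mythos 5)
Your proposal is correct and follows essentially the same route as the paper: the paper likewise reduces from poset dimension via Lemmas~\ref{obs:rep-permrep} and~\ref{obs:dimension} (comparability graph plus a universal vertex, so that the representation number equals the dimension) and then invokes the $n^{1-\epsilon}$-inapproximability of dimension from \cite{CLN13}. Your additional bookkeeping about the gap formulation, the $n=m+1$ shift, and NP membership via a polynomial-size $k$-uniform witness only makes explicit what the paper leaves implicit.
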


\section{Subclasses of Word-Representable Graphs}
\label{classif}

When faced with a new graph class, the most basic questions involve the
kind of properties it satisfies: which known classes are properly
contained (and which not), which graphs are otherwise contained (and
which not), what operations preserve word representability (or
non-representability), and which properties hold for these graphs.

Previously, it was known that the class of word-representable graphs
includes comparability graphs, outerplanar graphs, subdivision
graphs, and prisms. The purpose of this section is to clarify this
situation significantly, including resolving some conjectures. We
start with exploring the relation of colorability and representability.



\begin{corollary}
$3$-colorable graphs are word-representable.
\label{cor:3col}
\end{corollary}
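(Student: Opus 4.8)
The plan is to use Theorem~\ref{thm:rep-equals-semi-trans}: it suffices to show that every $3$-colorable graph admits a semi-transitive orientation. So fix a proper $3$-coloring of $G$ with color classes labeled $1,2,3$, and orient every edge from the lower-numbered color to the higher-numbered color. First I would observe that this orientation is acyclic, since the color label strictly increases along every arc, hence along every directed path. In particular a directed path has length at most two (it visits at most the three colors $1\to 2\to 3$), so the only directed paths that could possibly create a shortcut are the paths on three vertices $v_1\to v_2\to v_3$.

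Next I would check that no such path $v_1\to v_2\to v_3$ is a shortcut. A shortcut on these three vertices would require the arc $v_1\to v_3$ to be present while the arc $v_1\to v_2$ or $v_2\to v_3$ is missing — but both of those arcs are present by hypothesis (they are the edges of the path). Hence the definition of shortcut (which forces a missing arc $v_i\to v_j$ with $i<j$) cannot be met on only three vertices; more precisely, as noted in the excerpt, any shortcut has at least four vertices, and here there are no directed paths on four or more vertices at all. Therefore the orientation contains no shortcut, so it is semi-transitive.

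The main (and essentially only) obstacle is simply making the length-bound argument airtight: one must be careful that ``directed path'' here allows the endpoints $v_1$ and $v_k$ to coincide only in the trivial sense, and that the color of each vertex on a directed path is strictly increasing, so $k\le 3$; once that is pinned down the rest is immediate. Concluding: by Theorem~\ref{thm:rep-equals-semi-trans}, $G$ is word-representable, and indeed the construction shows $G$ is semi-transitively orientable with all directed paths of length at most two, which is a particularly simple certificate.
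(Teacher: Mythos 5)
Your proposal is correct and follows essentially the same route as the paper: orient edges from lower to higher color class and invoke Theorem~\ref{thm:rep-equals-semi-trans}. The paper simply asserts that this orientation is semi-transitive (``it is easy to see''), whereas you supply the verification — colors strictly increase along arcs, so the orientation is acyclic, directed paths have at most three vertices, and shortcuts require at least four — which is exactly the intended justification.
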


\begin{proof}
Given a 3-coloring of a graph $G$,
direct the edges from the first color class through the second to
the third class. It is easy to see that we obtain a semi-transitive digraph.
Thus, by Theorem.~\ref{thm:rep-equals-semi-trans}, the graph is word-representable.
\end{proof}

This implies a number of earlier results on word-representability,
including that of outerplanar graphs, subdivision graphs, and
prisms \cite{KP}.
The theorem also shows that
2-degenerate graphs (graphs in which every subgraph has a vertex of degree at most 2) and sub-cubic graphs (graphs of maximum degree 3, via Brooks theorem)
are word-representable.

Note, however, that the bound of $2n-4$ for the representability number can be improved for 3-colorable graphs.

\begin{theorem} \label{impr:3col}
$3$-colorable graphs are $2\lfloor 2n/3 \rfloor$-word-representable.
\end{theorem}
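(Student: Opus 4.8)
The plan is to refine the construction used in the proof of Theorem~\ref{thm:rep-equals-semi-trans}, exploiting the special structure of the semi-transitive orientation coming from a $3$-coloring. Fix a proper $3$-coloring of $G$ with color classes $C_1, C_2, C_3$, and orient every edge from the lower-indexed class to the higher-indexed one; as in Corollary~\ref{cor:3col}, this is an acyclic orientation $D$ with no shortcut (indeed every directed path has length at most $2$, so there is never a long path with a chord $v_1 \to v_k$ to worry about). The key point is that in this orientation the vertex set already splits into three levels, and a vertex $v \in C_2$, say, has all its in-neighbors in $C_1$ and all its out-neighbors in $C_3$. I would first argue that rather than covering the non-edges vertex-by-vertex (which costs $2$ uniform units per vertex, hence $2(n-\kappa)$ in general), we can cover all non-edges incident to an entire color class at once using a bounded number of uniform units, so that the total cost is proportional to the number of vertices in the two smaller color classes.

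Concretely, I would proceed as follows. Among the three color classes pick the largest, say $C_3$, so that $|C_1| + |C_2| \le \lfloor 2n/3 \rfloor$. Every non-edge of $G$ has at least one endpoint in $C_1 \cup C_2$ (since $C_3$ is independent, a non-edge with both endpoints in $C_3$ is automatically non-adjacent and will be handled for free, and otherwise some endpoint lies in $C_1\cup C_2$). So it suffices, by Lemma~\ref{covering}, to build for each $v \in C_1 \cup C_2$ a word covering the non-edges at $v$, and to do so in a way that, summed over $C_1 \cup C_2$, uses only $2\lfloor 2n/3\rfloor$ uniform units total — i.e.\ exactly $2$ units per vertex of $C_1 \cup C_2$. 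The point is that for such a $v$ the sets $A(v), B(v), T(v)$ from the proof of Lemma~\ref{lemma-nice} are all drawn from a very restricted range of levels: if $v \in C_1$ then $I(v) = \emptyset$ and all reachable vertices sit in $C_2 \cup C_3$, and symmetrically if $v \in C_2$. One then checks that the twelve-block word of Lemma~\ref{lemma-nice} collapses: several of the blocks are empty (e.g.\ $I$ when $v\in C_1$), so the word $W_v$ is in fact already $2$-uniform with at most a handful of nonempty blocks, and the $2$-uniformity and topological-sort verification go through verbatim as in Lemma~\ref{lemma-nice}. Concatenating these $|C_1|+|C_2|$ words, each $2$-uniform, and invoking Lemma~\ref{covering}, yields a $2(|C_1|+|C_2|)$-uniform word representing $G$, and since we chose $C_3$ largest we have $|C_1| + |C_2| \le \lfloor 2n/3 \rfloor$, giving the bound.

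The main obstacle, and the step to be careful about, is making sure that when we concatenate the per-vertex words over $v \in C_1 \cup C_2$ we still preserve all edges of $G$ and do not accidentally make two non-adjacent vertices alternate — in other words, that the hypotheses of Lemma~\ref{covering} really are met at each stage and that every non-edge is covered by at least one $W_v$. This is exactly the ``every non-edge has an endpoint outside the clique'' bookkeeping from the proof of Theorem~\ref{thm:rep-equals-semi-trans}, specialized to ``every non-edge has an endpoint in $C_1 \cup C_2$''; the one case needing a remark is a non-edge inside $C_3$, which is covered automatically because $C_3$ is a color class and hence its vertices are pairwise non-adjacent in $H$ as well (they never get forced to alternate, since in each $W_v$ they appear consecutively within a single block). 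Once that is in place, the arithmetic $|C_1|+|C_2| \le \lfloor 2n/3\rfloor$ finishes the proof.
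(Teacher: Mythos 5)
There is a genuine gap, and it is exactly the point where your proof dismisses the non-edges lying inside the largest class $C_3$. Lemma~\ref{lemma-nice} only covers non-edges incident with the chosen vertex $v$; the graph $H$ represented by its word contains the cliques $T\cup A\cup I$, $T\cup B\cup O$ and $I\cup O\cup\{v\}$, so two non-adjacent vertices $x,y\in C_3$ that both lie in, say, $O(v)$ occur in the pattern $xyxy$ (the block $O$ is the \emph{same} permutation in both of its occurrences) and hence \emph{do} alternate in $W_v$. Your claim that ``they appear consecutively within a single block'' and therefore ``never get forced to alternate'' is backwards: consecutive appearance in a block that is repeated in the same order is precisely what forces alternation. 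Since no piece in your construction covers a non-edge with both endpoints in $C_3$, Lemma~\ref{covering} gives you nothing about such pairs, and the concatenated word can represent spurious edges inside $C_3$. A minimal counterexample: let $G$ be the path $x\!-\!v\!-\!y$ with $C_1=\{v\}$, $C_3=\{x,y\}$. Your only piece is $W_v = v\,x\,y\,v\,x\,y$, in which $x$ and $y$ alternate, so the final word represents a triangle, not $G$.

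The paper avoids this by \emph{not} reusing the word of Lemma~\ref{lemma-nice}. For each $v$ in the two smaller classes it builds a different $2$-uniform word whose second half consists of \emph{reversed} permutations $R(\cdot)$ of the blocks; this makes every pair inside an independent color class occur in the pattern $xyyx$, so each piece simultaneously covers the non-edges at $v$ \emph{and} all non-edges inside two of the color classes (in particular inside the largest class $B$), and only then does Lemma~\ref{covering} and the count $2(|A|+|C|)\le 2\lfloor 2n/3\rfloor$ finish the argument. Your accounting of $2$ uniform units per vertex of the two smaller classes matches the paper, but without a mechanism (such as the reversal trick) that covers the intra-$C_3$ non-edges, the construction does not represent $G$, so the proof as written fails.
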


\begin{proof}
Let $G$ be a 3-colorable graph.

Suppose that the vertices of $G$ are partitioned into three independent sets $A$, $B$ and
$C$. We may assume that the set $B$ has the largest cardinality among these three sets. Direct the edges of $G$ from $A$ to $B\cup C$ and from $B$ to $C$ and denote the obtained orientation by $D$.


Let $v\in A$. Denote by $N^B_v$ (resp., $N^C_v$) an arbitrary permutation over the set of neighbors of $v$ in $B$ (resp., in $C$) and by $\overline{N^B_v}$  (resp., $\overline{N^C_v}$) --- arbitrary permutations over the remaining vertices of $B$ (resp., of $C$). Also, let $A_v$ be an arbitrary permutation over the set $A \setminus \{v\}$. For a permutation $P$ denote by $R(P)$ the permutation written in the reverse order.

Consider the 2-uniform word
  \[ W^A_v = A_v\ \overline{N^B_v}\ v\ N^B_v\ N^C_v\ v\ \overline{N^C_v}\ R(A_v) \ R(N^B_v) \ R(\overline{N^B_v}) \ R(\overline{N^C_v}) \ R(N^C_v). \]
Note that $W^A_v$ covers all non-edges lying inside $B$ and $C$ and also all non-edges incident with $v$. Indeed, the graph $H$ induced by $W^A_v$ is obtained from the complete tripartite graph with the partition $A,B,C$ by removal of all edges connecting $v$ with $\overline{N^B_v}\cup \overline{N^C_v}$. This justifies 3) and 4). Since $P(W^A_v)=A_v\ \overline{N^B_v}\ v\ N^B_v\ N^C_v\ \overline{N^C_v}$ and no directed path can go from $v$ to $\overline{N^B_v}$, it is a topological sort of $D$, justifying 2).


Similarly, for $v\in C$ consider the 2-uniform word
\[ W^C_v =  N^A_v\ \overline{N^A_v}\ \overline{N^B_v}\  N^B_v\ C_v\ R(\overline{N^A_v})\ v\ R(N^A_v)\ R(N^B_v)\ v\ R(\overline{N^B_v})\ R(C_v), \]
where $C_v$, $\overline{N^A_v}$, $N^A_v$, $\overline{N^B_v}$ and  $N^B_v$ are defined similarly to the respective sets above.
Using the similar arguments, one can show that $W^C_v$ covers all non-edges lying inside $A$ and $B$ and all non-edges incident with $v$.

Concatenating all these words, we obtain a $2k$-uniform word $W$ representing $G$, where $k=|A\cup C|$. Since $B$ has the largest cardinality, $k\le \lfloor 2n/3 \rfloor$.
\end{proof}

Corollary~\ref{cor:3col} does not extend to higher chromatic numbers. The
examples in Fig.~\ref{small} show that 4-colorable graphs can be
non-word-representable. We can, however, obtain a result in terms of the
{\em girth} of the graph, which is the length of its shortest cycle. 

\begin{proposition}
Let $G$ be a graph whose girth is greater than its chromatic number.
Then, $G$ is word-representable.
\end{proposition}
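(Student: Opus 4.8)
The plan is to show that if $G$ has girth $g > \chi(G) = c$, then $G$ admits a proper $c$-coloring whose induced orientation is semi-transitive, and then invoke Theorem~\ref{thm:rep-equals-semi-trans}. Fix a proper coloring $\phi: V \to \{1, 2, \ldots, c\}$ and orient each edge from its lower-colored endpoint to its higher-colored endpoint. This orientation $D$ is clearly acyclic, since colors strictly increase along every directed path. It remains to show $D$ contains no shortcut.

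The key observation is a length bound on directed paths: along any directed path $v_1 \to v_2 \to \cdots \to v_k$ the colors $\phi(v_1) < \phi(v_2) < \cdots < \phi(v_k)$ are strictly increasing, so $k \le c$. Now suppose for contradiction that $D$ contains a shortcut on vertices $v_1, \ldots, v_k$: a directed path $v_1 \to \cdots \to v_k$, the arc $v_1 \to v_k$, and a missing arc $v_i \to v_j$ for some $i < j$. In particular $v_i$ and $v_j$ are nonadjacent in $G$, so the $v_1, \ldots, v_k$ are not all distinct as a clique — but more to the point, I claim the vertices $v_1, \ldots, v_k$ are pairwise distinct (they lie on a directed path in an acyclic digraph, and $v_1 \to v_k$ is an edge so $v_1 \ne v_k$; acyclicity forbids repeats). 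The directed path $v_1 \to \cdots \to v_k$ together with the arc $v_1 \to v_k$ forms a cycle in the underlying undirected graph $G$ of length $k$. Since $k \le c < g$, this cycle is shorter than the girth of $G$ — a contradiction, unless this ``cycle'' is degenerate. The only degeneracy is $k \le 2$, i.e. the path is just $v_1 \to v_2$ with arc $v_1 \to v_2$; but then there is no room for a missing arc $v_i \to v_j$ with $i < j \le k$, so this is not a shortcut. Hence $k \ge 3$, and we have a genuine cycle of length $k \le c < g$, contradicting the girth hypothesis. Therefore $D$ has no shortcut, so $D$ is a semi-transitive orientation and $G$ is word-representable.

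I expect the main (minor) obstacle is handling the boundary cases cleanly: one must be careful that a shortcut requires $k \ge 4$ (as noted in the excerpt), that the vertices on its defining semi-cycle are distinct, and that the undirected cycle formed by the path plus the chord $v_1 v_k$ genuinely has length $k$ rather than collapsing. Once the length bound $k \le \chi(G)$ on directed paths is in hand, everything else is immediate from $g > \chi(G)$. One should also remark that this generalizes Corollary~\ref{cor:3col}: any graph with $\chi(G) \le 3$ and girth $\ge 4$ (triangle-free) is covered, and more generally the result shows, e.g., that triangle-free planar graphs (which have $\chi \le 3$ by Grötzsch, girth $\ge 4$) are word-representable, as are graphs of girth exceeding their chromatic number in general.
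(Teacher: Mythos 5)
Your proof is correct and follows essentially the same route as the paper: orient edges from lower to higher color classes of a proper $\chi(G)$-coloring, note that directed paths then have at most $\chi(G)$ vertices, and observe that any shortcut would yield an undirected cycle of length at most $\chi(G)<g$, contradicting the girth hypothesis. Your extra care with degenerate cases and distinctness of vertices is fine but not needed beyond the paper's observation that a shortcut has at least four vertices.
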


\begin{proof}
Suppose the graph is colored with $\chi(G)$ natural numbers.
Orient the edges of the graph from smaller to larger colors.
There is no directed path with more than $\chi(G)-1$ arcs, but since
$G$ contains no cycle of $\chi(G)$ or fewer edges, there can be no
shortcut. Hence, the digraph is semi-transitive.
\end{proof}

\section{Conclusions}\label{open}

Two of open problems stated in the preliminary version of this paper~\cite{HKP2011} were solved. One of the solved problems is on NP-hardness of the problem of recognition whether a given graph is word-representable or not, and it is discussed in Section~\ref{nice} (see Corollary~\ref{cor:inNP}). The other problem was solved in~\cite{CKL}, where it was shown that there exist non-word-representable graphs of maximum degree 4. We end up the paper with stating revised versions of the remaining two open problems.

\begin{enumerate}
  \setlength{\itemsep}{1pt}
  \setlength{\parskip}{0pt}
  \setlength{\parsep}{0pt}
\item What is the maximum representation number of a graph? We know that it lies between $n/2$
and $2n-4$.
\item Is there an algorithm that forms an $f(k)$-representation of a
$k$-word-representable graph, for some function $f$? Namely, can the
representation number be approximated as a function of itself?
By Prop.~\ref{prop:approx}, this function must grow faster than any fixed polynomial.
\end{enumerate}

\end{document}